\documentclass[12pt]{amsart}

\headheight=6.15pt
\textheight=8.75in
\textwidth=6.5in
\oddsidemargin=0in
\evensidemargin=0in
\topmargin=0in

\newtheorem{theorem}{Theorem}[section]
\newtheorem{corollary}[theorem]{Corollary}
\newtheorem{lemma}[theorem]{Lemma}
\newtheorem{proposition}[theorem]{Proposition}

\theoremstyle{theorem}

\newenvironment{customthm}[1]
  {\innercustomthm}
  {\endinnercustomthm}

\theoremstyle{definition}

\theoremstyle{remark}

\numberwithin{equation}{section}

\def\no{\nonumber}

\newcommand{\der}[1]{\frac{\partial}{\partial #1}}
\newcommand{\fder}[2]{\frac{\partial #1}{\partial #2}}

\begin{document}
\title[Shape optimization for the Steklov problem in higher dimensions]
{Shape optimization for the Steklov problem in higher dimensions}
\author{Ailana Fraser}
\address{Department of Mathematics \\
                 University of British Columbia \\
                 Vancouver, BC V6T 1Z2}
\email{afraser@math.ubc.ca}
\author{Richard Schoen}
\address{Department of Mathematics \\
                 University of California \\
                 Irvine, CA 92617}
\email{rschoen@math.uci.edu}
\thanks{2010 {\em Mathematics Subject Classification.} 35P15, 58J50, 35J25. \\
A. Fraser was partially supported by  the 
Natural Sciences and Engineering Research Council of Canada and R. Schoen
was partially supported by NSF grant DMS-1710565. Part of this work was done while the authors were visiting ETH, and they would like to thank the FIM for its support.}

\begin{abstract} 
We show that the ball does not maximize the first nonzero Steklov eigenvalue among all contractible domains of fixed boundary volume in $\mathbb{R}^n$ when $n \geq 3$. 
This is in contrast to the situation when $n=2$, where a result of Weinstock from 1954 shows that the disk uniquely maximizes the first Steklov eigenvalue among all simply connected domains in the plane having the same boundary length.
When $n \geq 3$, we show that increasing the number of boundary components does not
increase the normalized (by boundary volume) first Steklov eigenvalue. This is in contrast to recent results which have been obtained for surfaces.
\end{abstract}

\maketitle

\section{Introduction}
In this paper we study optimization problems for Steklov eigenvalues on manifolds $M^n$
with boundary. The main theme of the paper is to show that some of the refined
results which are true for surfaces ($n=2$) do not hold in higher dimensions ($n\geq 3$).

We first consider the question of optimizing the first nonzero Steklov eigenvalue $\sigma_1(\Omega)$
for suitably normalized domains $\Omega$ in $\mathbb R^n$. A theorem of F. Brock \cite{Br}
says that a round ball uniquely maximizes $\sigma_1$ over all smooth domains with the same
(or larger) volume. On the other hand for $n=2$ there is a stronger result of R. Weinstock \cite{W} which says that the unit disk in the plane uniquely maximizes $\sigma_1$ over all simply connected
domains with the same (or larger) boundary length. From the isoperimetric inequality, any domain
which has the same volume as a ball necessarily has boundary volume which is at least as
large. Thus we see that for simply connected plane domains Weinstock's theorem implies
Brock's theorem. On the other hand Brock's theorem holds for arbitrary plane domains and
domains in $\mathbb{R}^n$ for $n\geq 3$. This leads to the question of whether there is
an analogue to Weinstock's theorem in higher dimensions. The question of whether the
ball in higher dimensions maximizes $\sigma_1$ over domains which are diffeomorphic
to the ball (contractible domains) has been open. Some related questions were posed in 
\cite[page 4]{PS} and  \cite[Open Problem 2]{GP4}. 
In this paper we show that this is not true in higher dimensions.
\begin{theorem} \label{theorem:ball}
For $n\geq 3$ there is a smooth contractible domain $\Omega$
with $|\partial\Omega|=|\partial \mathbb{B}_1|$ where $\mathbb{B}_1$ is a unit ball in
$\mathbb R^n$, but with $\sigma_1(\Omega)>\sigma_1(\mathbb{B}_1)=1$.
\end{theorem}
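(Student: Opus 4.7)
The plan is to find a contractible domain $\Omega \subset \mathbb{R}^n$ ($n \geq 3$) whose scale-invariant normalized Steklov eigenvalue $\sigma_1(\Omega)\,|\partial\Omega|^{1/(n-1)}$ strictly exceeds $\omega_{n-1}^{1/(n-1)}$, the value for $\mathbb{B}_1$ (where $\omega_{n-1} = |\partial\mathbb{B}_1|$). My strategy is indirect: first exhibit a non-contractible competitor, the thin spherical annulus, that already beats the ball, then restore contractibility via a thin radial solid tube and argue that the normalized eigenvalue drops by a strictly smaller amount.

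\textbf{Step 1 (annulus beats the ball).} Consider $A_{r_0} = \mathbb{B}_1 \setminus \overline{\mathbb{B}_{r_0}}$ for small $r_0 > 0$. By separation of variables in spherical harmonics, harmonic functions with degree-$\ell$ angular part have radial factor $\alpha r^\ell + \beta r^{-(\ell+n-2)}$, and imposing the two Steklov conditions at $r=1$ and $r=r_0$ yields a $2\times 2$ secular equation for $\sigma$. The smallest positive Steklov eigenvalue comes from $\ell = 1$ (the coordinate-function analogue); expanding the secular equation around $\sigma = 1$ for small $r_0$ gives
\[
\sigma_1(A_{r_0}) = 1 - \frac{n}{n-1}\,r_0^{n} + O(r_0^{n+1}).
\]
Since $|\partial A_{r_0}| = \omega_{n-1}(1 + r_0^{n-1})$,
\[
\sigma_1(A_{r_0})\,|\partial A_{r_0}|^{1/(n-1)} = \omega_{n-1}^{1/(n-1)}\Bigl(1 + \frac{r_0^{n-1}}{n-1} - \frac{n\,r_0^n}{n-1} + O(r_0^{n+1})\Bigr),
\]
which strictly exceeds $\omega_{n-1}^{1/(n-1)}$ for sufficiently small $r_0$: the $r_0^{n-1}$ gain from the extra inner boundary dominates the $r_0^n$ loss in the eigenvalue. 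I would also verify the $\ell=0$ and $\ell\geq 2$ modes provide no smaller positive Steklov eigenvalue.

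\textbf{Step 2 (contractible modification and continuity).} To restore contractibility, let $\Omega_\epsilon = \mathbb{B}_1 \setminus K_\epsilon$, where $K_\epsilon$ is a smoothly rounded closed ``key'' shape consisting of $\overline{\mathbb{B}_{r_0}}$ joined to a point of $\partial\mathbb{B}_1$ by a radial solid tube of radius $\epsilon$. Then $\partial\Omega_\epsilon$ is a single smooth connected hypersurface. The radial deformation retraction of $A_{r_0}$ onto $\partial\mathbb{B}_{1/2}$ carries $\Omega_\epsilon$ onto a sphere-minus-small-disk, so $\Omega_\epsilon$ is contractible. The boundary area satisfies $|\partial\Omega_\epsilon| = |\partial A_{r_0}| + O(\epsilon^{n-2})$, which converges to $|\partial A_{r_0}|$ precisely because $n \geq 3$. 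For the eigenvalue, an upper bound $\limsup_{\epsilon\to 0}\sigma_1(\Omega_\epsilon) \leq \sigma_1(A_{r_0})$ follows by inserting the annulus eigenfunction into the Rayleigh quotient on $\Omega_\epsilon \subset A_{r_0}$, and a matching lower bound uses that the removed tube $T_\epsilon$ has vanishing $n$-capacity (again since $n \geq 3$) to obtain uniformly bounded extensions $H^1(\Omega_\epsilon) \to H^1(A_{r_0})$ that preserve the Rayleigh quotient up to $o(1)$. Combining, for $r_0$ fixed small and $\epsilon \ll r_0$ the normalized eigenvalue of $\Omega_\epsilon$ still exceeds the ball's; rescaling $\Omega_\epsilon$ so that $|\partial\Omega_\epsilon| = |\partial\mathbb{B}_1|$ then gives the required contractible domain $\Omega$ with $\sigma_1(\Omega) > 1$.

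\textbf{Main obstacle.} The hard part is the eigenvalue continuity in Step~2. Although the vanishing $n$-capacity of the tube in $n \geq 3$ is precisely the dimensional feature that makes Weinstock's inequality fail, converting capacity bounds into uniform extension estimates on Sobolev spaces over domains with varying geometry, and controlling boundary integrals of candidate eigenfunctions over the disappearing lateral surface of $T_\epsilon$, requires care. In particular, one must rule out that a low-$\sigma$ eigenfunction of $\Omega_\epsilon$ concentrates mass along the tube as $\epsilon \to 0$, which is done by testing against a suitable logarithmic (or $n$-capacity) cutoff supported near $T_\epsilon$.
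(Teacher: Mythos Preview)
Your proposal is correct and follows essentially the same route as the paper: first show via the separation-of-variables computation that the annulus $\mathbb{B}_1\setminus\mathbb{B}_{r_0}$ beats the ball in the normalized quantity (your Step~1 is the paper's Proposition~3.1, with the same expansion $\sigma_1=1-\frac{n}{n-1}r_0^n+O(r_0^{n+1})$), then excise a thin radial tube to restore contractibility and prove $\sigma_1$ is continuous under this surgery (your Step~2 is the paper's Theorem~4.3/Proposition~4.1, and the non-concentration obstacle you flag is exactly Lemma~4.2, handled there with the same logarithmic cutoff you suggest). The only organizational difference is that for the lower bound the paper passes to a subsequential limit of eigenfunctions and shows the limit extends across the curve to a genuine Steklov eigenfunction of the annulus, whereas you phrase it via uniform $H^1$ extension operators; both packages rest on the same two ingredients (vanishing capacity of the tube for $n\ge 3$ and control of the $L^2$ boundary mass on the tube wall), so the arguments are interchangeable.
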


In Proposition \ref{proposition:bound} we also give an explicit upper bound on $\sigma_1(\Omega)$
for any smooth domain in $\mathbb R^n$ in terms of its boundary volume. This leaves open
the question of finding the sharp value for this upper bound. Theorem \ref{theorem:ball} shows that
it is strictly larger than its value for a ball. 

In order to prove Theorem \ref{theorem:ball} we first consider the annular domain $\Omega_\epsilon=\mathbb{B}_1\setminus \mathbb{B}_\epsilon$. We show in  Proposition \ref{proposition:punctured ball} that the $k$th Steklov eigenvalue is decreased by approximately a positive constant times $\epsilon^{2k+n-2}$. 
When $k=1$ the exponent is equal to $n$, and it follows that when $\epsilon$ is small the normalized first
Steklov eigenvalue $\sigma_1(\Omega_\epsilon)|\partial\Omega_\epsilon|^{\frac{1}{n-1}}$
is strictly larger than that of $\mathbb{B}_1$ (actually the same is true for higher eigenvalues). 
For $n\geq 3$, we then show that we can modify the domain $\Omega_\epsilon$ to make it contractible while changing the normalized first Steklov eigenvalue by an arbitrarily small amount. This is accomplished by
adding a small tube joining the boundary components and showing that the construction can be done keeping the normalized eigenvalue nearly unchanged. This construction leads to a more general question about boundary connectedness.

Another result for $n=2$ which was discovered in \cite{FS2} is that by adding an
extra boundary component to a surface the normalized first Steklov eigenvalue
$\sigma_1L$ (where $L$ is the boundary length) can be made strictly larger. This was
used to show that surfaces of genus $0$ (homeomorphic to plane domains) which
maximize $\sigma_1$ for their boundary length must have an infinite number of
boundary components. The question of whether a similar phenomenon might be true
in higher dimensions was posed in  \cite[following Open Problem 2]{GP4}.
We show here that this is also not true for manifolds with $n\geq 3$. Specifically
we show that the number of boundary components does not effect the maximum
value of the normalized first Steklov eigenvalue.
\begin{theorem} \label{theorem:connected-boundary}
Given any compact Riemannian manifold $\Omega^n$ with non-empty
boundary and $n\geq 3$, and given any $\epsilon>0$ there exists a smooth subdomain $\Omega_\epsilon$ of $\Omega$ with connected boundary such that 
\[
|\Omega|-|\Omega_\epsilon|<\epsilon,\ ||\partial\Omega|-|\partial\Omega_\epsilon||<\epsilon,
\ \mbox{and}\ |\sigma_1(\Omega)-\sigma_1(\Omega_\epsilon)|<\epsilon.
\]
\end{theorem}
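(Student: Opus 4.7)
My strategy is iterative: reduce the number of boundary components of $\Omega$ by one at a time by cutting along a thin tube joining two components. It suffices to show that whenever $\partial\Omega$ has components $\Sigma_1,\ldots,\Sigma_k$ with $k\geq 2$, one can find a smooth subdomain $\Omega'\subset\Omega$ with $\partial\Omega'$ having $k-1$ components such that $|\Omega|-|\Omega'|$, $||\partial\Omega|-|\partial\Omega'||$ and $|\sigma_1(\Omega)-\sigma_1(\Omega')|$ are each below any prescribed tolerance. Iterating $k-1$ times and distributing the error budget finishes the proof.

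For the one-step reduction, choose a smooth embedded arc $\gamma$ from a point of $\Sigma_1$ to a point of $\Sigma_2$ meeting $\partial\Omega$ transversally only at its endpoints, and for small $\delta>0$ let $T_\delta$ be a smooth tubular $\delta$-neighborhood of $\gamma$ and $\Omega_\delta:=\Omega\setminus\overline{T_\delta}$. The new boundary $\partial\Omega_\delta$ is obtained from $\partial\Omega$ by excising two small disks from $\Sigma_1\cup\Sigma_2$ and attaching the lateral cylinder of $T_\delta$; this fuses $\Sigma_1$ and $\Sigma_2$ into a single component. In dimension $n\geq 3$ the tube satisfies $|T_\delta|=O(\delta^{n-1})$ and its lateral area is $O(\delta^{n-2})$, so the volume and boundary-volume estimates are automatic; this is where the hypothesis $n\geq 3$ enters decisively.

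The core of the argument is the continuity $\sigma_1(\Omega_\delta)\to\sigma_1(\Omega)$ as $\delta\to 0$, obtained from the variational characterization together with the fact that a smooth curve in a manifold of dimension $\geq 3$ has vanishing $H^1$-capacity. For the upper bound, let $u$ be the first Steklov eigenfunction of $\Omega$, normalized with zero boundary mean and unit boundary $L^2$-norm, let $\phi_\delta$ be a smooth cutoff that vanishes on $T_\delta$, equals $1$ outside a slightly larger neighborhood of $\gamma$, and satisfies $\int_\Omega|\nabla \phi_\delta|^2\to 0$ (such a cutoff exists by the capacity property), and use $\phi_\delta u$ minus a small constant as a test function on $\Omega_\delta$; the resulting Rayleigh quotient tends to $\sigma_1(\Omega)$ because the cross term is controlled by Cauchy-Schwarz and the lateral-boundary contribution to the $L^2$ denominator is negligible. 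For the lower bound, take the first eigenfunction $u_\delta$ on $\Omega_\delta$ with unit boundary $L^2$-norm and zero boundary mean, extend it to all of $\Omega$ by solving the Dirichlet problem on $T_\delta$ with data equal to the trace of $u_\delta$ on the lateral cylinder, extract a weak $H^1(\Omega)$-limit $u_\infty$, and verify that $u_\infty$ still has unit $L^2$-norm and zero mean on $\partial\Omega$ with $\int_\Omega|\nabla u_\infty|^2\leq\liminf\sigma_1(\Omega_\delta)$; this contradicts the variational characterization of $\sigma_1(\Omega)$ if $\liminf\sigma_1(\Omega_\delta)<\sigma_1(\Omega)$.

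The main obstacle will be the lower bound, specifically ruling out loss of boundary $L^2$-mass onto the shrinking lateral cylinder in the weak limit. This requires a quantitative trace estimate controlling the $L^2$-integral of $u_\delta$ over the lateral cylinder by the uniformly bounded $H^1(\Omega_\delta)$-norm of $u_\delta$ together with the vanishingly small area $O(\delta^{n-2})$ of that cylinder; combined with the capacity cutoff, this gives compactness of the boundary traces on the unchanged part of $\partial\Omega$ and closes the argument.
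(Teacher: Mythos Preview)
Your plan is correct and follows essentially the same route as the paper: remove thin tubes around embedded arcs joining boundary components, use the zero $H^1$-capacity of curves in dimension $n\geq 3$ via a logarithmic cutoff to get the upper bound $\limsup_{\delta\to 0}\sigma_1(\Omega_\delta)\leq\sigma_1(\Omega)$, and identify as the key obstacle the noconcentration of boundary $L^2$-mass on the shrinking lateral cylinder. The paper isolates this last point as a separate lemma (your ``quantitative trace estimate'') and proves it by comparison, on each cross-section orthogonal to $\gamma$, with the harmonic function on an annulus having the same boundary values.

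The only noteworthy divergence is in how the lower bound is executed. You pass to a weak $H^1(\Omega)$-limit after harmonically extending $u_\delta$ into the tube and then compare Rayleigh quotients; the paper instead uses interior and boundary elliptic estimates to obtain $C^2$ convergence on compacta of $\Omega\setminus\gamma$, and then removes the singular curve $\gamma$ via the same logarithmic cutoff to show the limit is an actual Steklov eigenfunction of $\Omega$ with eigenvalue $\lim\sigma_1(\Omega_\delta)$. Both are valid. One caution on your version: the Dirichlet harmonic extension on $T_\delta$ need not have vanishing Dirichlet energy, so the inequality $\int_\Omega|\nabla u_\infty|^2\leq\liminf\sigma_1(\Omega_\delta)$ should not be read as weak lower semicontinuity on all of $\Omega$; rather, apply weak lower semicontinuity on each fixed $\Omega_{\delta_0}\subset\Omega\setminus\gamma$ (where the extended and unextended functions coincide) and let $\delta_0\to 0$. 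This is a harmless rephrasing but worth making explicit when you write out the details.
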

In Section \ref{section:bound} of the paper we give an explicit coarse upper bound on the normalized first
Steklov eigenvalue of a domain in $\mathbb R^n$. This is done by using stereographic
projection and a balancing argument. This is a less general but more precise bound
than that of \cite{CEG} (see also \cite{H}).

In Section \ref{section:annulus} we do the asymptotic calculation of the $k$th Steklov eigenvalue of
$\mathbb{B}_1\setminus \mathbb{B}_\epsilon$. This calculation had been done
previously by the authors for $k=1$ and $n=2$ (cf. \cite[Proposition 4.2]{FS2})
and \cite[Example 4.2.5]{GP4}, 
and  was done for $k=1$ and $n\geq 2$ by E. Martel (see \cite[Remark 4.2.8]{GP4}).

In Section \ref{section:surgery} we prove the main results concerning the effect of boundary connectedness.
This involves delicate estimation of the first Steklov eigenvalue for domains with small
tubes connecting boundary components.

\section{Upper bounds} \label{section:bound}

In dimension $n=2$, for any compact Riemannian surface, the $k$-th normalized Steklov eigenvalue is bounded above in terms of $k$, the genus $\gamma$ and the number of boundary components $b > 0$ of the surface, in the most general form by Karpukhin \cite{Ka},
\[
     \sigma_k (\Sigma) |\partial \Sigma| \leq 2\pi (k+\gamma+b-1)
\]
(see \cite{GP3}, \cite{FS1}, \cite{HPS}, \cite{W} for earlier results) and also in terms of only the genus and $k$,
\[
    \sigma_k (\Sigma) |\partial \Sigma| \leq Ak + B \gamma
\]
for constants $A$ and $B$ (see \cite{H}, \cite{CEG}, \cite{Ko}). For simply-connected domains in $\mathbb{R}^2$, the first bound is sharp, and the $k$-th normalized Steklov eigenvalue is maximized in the limit by a disjoint union of $k$ identical disks for any $k \geq 1$ (\cite{GP1}, \cite{W}). In general the bounds are not sharp, however sharp bounds are known for the first nonzero normalized Steklov eigenvalue for the annulus and the M\"obius band (\cite{FS2}), and the authors proved existence of a metric that maximizes the first nonzero normalized eigenvalue on any compact orientable surface of genus zero (\cite{FS2}). Moreover, it was shown in \cite{FS2} that the maximum value of $\sigma_1(\Sigma)|\partial \Sigma|$ over all smooth metrics on a compact orientable surface $\Sigma$ of genus zero, is strictly increasing in the number $b$ of boundary components, and converges to $4\pi$ as $b$ tends to infinity. Thus, the asymptotically sharp upper bound for surfaces of genus zero is $4\pi$. 

In higher dimensions, it was shown in \cite{CEG} (see also a generalization in \cite{H}) that if $M$ is Riemannian manifold of dimension $n \geq 2$ that is conformally equivalent to a complete Riemannian manifold  with non-negative Ricci curvature, then for any domain $\Omega \subset M$,
\[
      \sigma_k(\Omega) |\partial \Omega|^{\frac{1}{n-1}}
      \leq \frac{\alpha(n)}{{I(\Omega)^{\frac{n-2}{n-1}}}} \, k^{\frac{2}{n}}
\]
where $I(\Omega)=|\partial \Omega| / |\Omega|^{\frac{n-1}{n}}$ is the isoperimetric ratio. 
In particular, for any bounded domain $\Omega$ in $\mathbb{R}^n$, by the classical isoperimetric inequality, it follows that the normalized Steklov eigenvalues are uniformly bounded above, 
$\sigma_k(\Omega) |\partial \Omega|^{\frac{1}{n-1}} \leq C(n) k^{\frac{1}{n-1}}$. We observe that in the special case when $\Omega \subset \mathbb{R}^n$, with $n \geq 2$, an explicit bound can be directly obtained easily for $k=1$ as follows.

\begin{proposition} \label{proposition:bound}
If $\Omega$ is a domain in $\mathbb{R}^n$, $n \geq 2$, then
\[
      \sigma_1(\Omega) |\partial \Omega|^{\frac{1}{n-1}}     
      \leq  \frac{n^{\frac{1}{n-1}}|\mathbb{S}^n|^{\frac{2}{n}}}{|\mathbb{B}^n|^{\frac{n-2}{n(n-1)}}}.
\]
\end{proposition}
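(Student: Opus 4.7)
The plan is to run the classical Hersch–Szegő argument adapted to the Steklov setting, using stereographic projection to produce $n+1$ admissible test functions for the Rayleigh quotient
\[
\sigma_1(\Omega)=\inf\Bigl\{\frac{\int_\Omega|\nabla u|^2}{\int_{\partial\Omega}u^2}\;:\;\int_{\partial\Omega}u=0\Bigr\}.
\]
Let $\pi:\mathbb{R}^n\to\mathbb{S}^n\subset\mathbb{R}^{n+1}$ be inverse stereographic projection, and for a conformal automorphism $\Psi$ of $\mathbb{S}^n$ set $F=\Psi\circ\pi:\Omega\to\mathbb{S}^n$, with coordinate components $F_1,\dots,F_{n+1}$. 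To make each $F_i$ admissible I need the balancing condition $\int_{\partial\Omega}F_i\,dS=0$, and I would invoke the standard Hersch-type lemma: the map sending a Möbius automorphism to the barycenter of the pushforward of the boundary measure under $\pi$ extends continuously to the closed ball and has no fixed points on the boundary sphere, so a degree/fixed-point argument yields a choice of $\Psi$ that centers the measure.

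Summing the Rayleigh inequalities over $i$ and using $|F|^2=\sum_iF_i^2\equiv 1$ on $\mathbb{S}^n$, I obtain
\[
\sigma_1(\Omega)\,|\partial\Omega| = \sigma_1(\Omega)\sum_{i=1}^{n+1}\int_{\partial\Omega}F_i^2\,dS \le \sum_{i=1}^{n+1}\int_\Omega|\nabla F_i|^2\,dx.
\]
Because $F$ is conformal, writing $F^*g_{\mathbb{S}^n}=\rho^2 g_{\mathbb{R}^n}$ gives $\sum_i|\nabla F_i|^2=n\rho^2$, while the volume form on $\mathbb{S}^n$ pulls back as $\rho^n\,dx$. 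Hence
\[
\sum_i\int_\Omega|\nabla F_i|^2\,dx = n\int_\Omega\rho^2\,dx,\qquad \int_\Omega\rho^n\,dx = |F(\Omega)| \le |\mathbb{S}^n|.
\]

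The next step is Hölder with exponents $n/2$ and $n/(n-2)$:
\[
\int_\Omega\rho^2\,dx \le \Bigl(\int_\Omega\rho^n\,dx\Bigr)^{2/n}|\Omega|^{(n-2)/n} \le |\mathbb{S}^n|^{2/n}|\Omega|^{(n-2)/n}.
\]
Finally I insert the sharp Euclidean isoperimetric inequality $|\partial\Omega|\ge n|\mathbb{B}^n|^{1/n}|\Omega|^{(n-1)/n}$ to replace $|\Omega|^{(n-2)/n}$ by $|\partial\Omega|^{(n-2)/(n-1)}/(n|\mathbb{B}^n|^{1/n})^{(n-2)/(n-1)}$ and rearrange. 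Combining these estimates yields
\[
\sigma_1(\Omega)\,|\partial\Omega|^{1/(n-1)} \le \frac{n^{1/(n-1)}|\mathbb{S}^n|^{2/n}}{|\mathbb{B}^n|^{(n-2)/(n(n-1))}},
\]
as claimed.

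The only delicate point is the Hersch balancing step, where one must verify that the boundary measure on $\partial\Omega$ is not already concentrated at a single point so that the fixed-point argument applies; since $\partial\Omega$ has positive $(n-1)$-dimensional measure and is not a point, this is routine. Everything else is algebraic manipulation of known sharp inequalities (conformal energy identity, Hölder, classical isoperimetry), which together force the exponent structure on the right-hand side.
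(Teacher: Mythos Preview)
Your proof is correct and follows essentially the same route as the paper's own argument: Hersch-type balancing via M\"obius transformations of $\mathbb{S}^n$ applied to inverse stereographic projection, summing the Rayleigh inequalities for the coordinate functions, H\"older to pass from the conformal $L^2$ energy to the $L^n$ energy (which is bounded by $|\mathbb{S}^n|$), and then the sharp isoperimetric inequality. The only cosmetic difference is that you make the conformal factor $\rho$ explicit whereas the paper packages the same computation as $\int_\Omega|\nabla F|^2\le\bigl(\int_\Omega|\nabla F|^n\bigr)^{2/n}|\Omega|^{(n-2)/n}$; these are identical since $|\nabla F|^2=n\rho^2$ and $|\nabla F|^n=n^{n/2}\rho^n$.
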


\begin{proof}
Using stereographic projection, and a standard balancing argument, there exists a conformal map $F:\Omega \rightarrow \mathbb{S}^n \subset \mathbb{R}^{n+1}$ with $\int_{\partial \Omega} F =0$. Using the component functions $F_i$, $i=1, \ldots , n+1$, as test functions in the variational characterization of $\sigma_1$, we have
\[
    \sigma_1 \int_{\partial \Omega} F_i^2 \leq \int_\Omega |\nabla F_i|^2.
\]
Summing on $i$, and applying H\"older's inequality,
\[
    \sigma_1 |\partial \Omega |  \leq \int_\Omega |\nabla F |^2  
     \leq \left( \int_\Omega |\nabla F|^n  \right)^{\frac{2}{n}} |\Omega|^{\frac{n-2}{n}}
    \leq n |\mathbb{S}^n|^{\frac{2}{n}} |\Omega|^{\frac{n-2}{n}}
    \leq  \frac{n^{\frac{1}{n-1}}|\mathbb{S}^n|^{\frac{2}{n}}}{|\mathbb{B}^n|^{\frac{n-2}{n(n-1)}}} |\partial \Omega|^{\frac{n-2}{n-1}},
\]
where the last inequality follows from the isoperimetric inequality 
$|\Omega|/|\mathbb{B}^n| \leq (|\partial \Omega|/|\mathbb{S}^{n-1}|)^{\frac{n}{n-1}}$ and the formula 
$|\mathbb{S}^{n-1}|=n|\mathbb{B}^n|$. Simplifying, we obtain the desired bound.
\end{proof}

\section{Dirichlet-to-Neumann spectrum of $\mathbb{B}^n_1 \setminus \mathbb{B}^n_\epsilon$} 
\label{section:annulus}

Throughout the paper we let $\mathbb{B}^n_\rho$ denote the ball of radius $\rho$ in $\mathbb{R}^n$, and use spherical coordinates $\rho$, $\phi_1, \ldots, \phi_{n-1}$ on $\mathbb{R}^n$.
In this section we calculate the Dirichlet-to-Neumann spectrum of $\mathbb{B}^n_1 \setminus \mathbb{B}^n_\epsilon$, where $0<\epsilon<1$ is small, and show that $k$-th nonzero normalized Steklov eigenvalue of $\mathbb{B}^n_1 \setminus \mathbb{B}^n_\epsilon$ is strictly greater than that of the ball $\mathbb{B}^n_1$. For $k=1$ this was verified by E. Martel \cite[Remark 4.2.8]{GP4}.
\begin{proposition} \label{proposition:punctured ball}
For $\epsilon$ sufficiently small, $0< \epsilon < 1$, the $k$-th \textit{\textbf{distinct}} Steklov eigenvalue of $\mathbb{B}^n_1 \setminus \mathbb{B}^n_\epsilon$ for  $n \geq 3$ is
\[
      \sigma_k = k-\frac{k(2k+n-2)}{k+n-2}\epsilon^{2k+n-2} + O(\epsilon^{2k+n-1}),
\]
for $k=1, \, 2, \, 3, \ldots$.
In particular, for $\epsilon$ sufficiently small the first nonzero Steklov eigenvalue is
\[
      \sigma_1 = 1-\frac{n}{n-1}\epsilon^{n} + O(\epsilon^{n+1}),
\]  
and 
\[
     \sigma_k(\mathbb{B}^n_1) |\partial \mathbb{B}^n_1|^{\frac{1}{n-1}}
     < \sigma_k(\mathbb{B}^n_1\setminus \mathbb{B}^n_\epsilon) |\partial(\mathbb{B}^n_1\setminus \mathbb{B}^n_\epsilon)|^{\frac{1}{n-1}}.
\]
\end{proposition}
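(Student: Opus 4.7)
The plan is to separate variables in spherical coordinates and reduce the Steklov problem on the annulus to a one-dimensional problem for each spherical harmonic degree. For a spherical harmonic $Y$ of degree $k$ (so $\Delta_{S^{n-1}} Y = -k(k+n-2) Y$), harmonic extensions on the annulus are of the form $u(\rho,\phi) = f(\rho) Y(\phi)$ with $f(\rho) = A \rho^{k} + B \rho^{-(k+n-2)}$, using that $n \geq 3$ avoids the logarithmic case. The Steklov condition $\partial_\nu u = \sigma u$ at the outer boundary $\rho = 1$ (outward normal $\partial_\rho$) and at the inner boundary $\rho=\epsilon$ (outward normal $-\partial_\rho$) then yields the linear system
\[
 (k-\sigma)A - (k+n-2+\sigma)B = 0, \qquad -\epsilon^{2k+n-2}(k+\sigma\epsilon) A + \bigl((k+n-2)-\sigma\epsilon\bigr) B = 0.
\]

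Next I would set the determinant of this system to zero to get the characteristic equation
\[
 (k-\sigma)\bigl((k+n-2)-\sigma\epsilon\bigr) = (k+n-2+\sigma)(k+\sigma\epsilon)\,\epsilon^{2k+n-2}.
\]
For $\epsilon=0$ the roots collapse to $\sigma = k$ (the Steklov eigenvalue of $\mathbb{B}^n_1$ at degree $k$) and a root that runs off to infinity. Writing $\sigma = k + \delta$ and expanding in small $\epsilon$ and small $\delta$, the leading balance $-(k+n-2)\delta = k(2k+n-2)\,\epsilon^{2k+n-2}$ gives $\delta = -\frac{k(2k+n-2)}{k+n-2}\epsilon^{2k+n-2}$, with the next-order terms in the expansion contributing an $O(\epsilon^{2k+n-1})$ error. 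A short implicit-function-theorem argument in $\delta$ near $0$ with parameter $\epsilon$ justifies the expansion rigorously.

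To identify this with the $k$-th distinct Steklov eigenvalue, I would note that inside each spherical harmonic degree the radial system is the same, so the eigenvalue obtained has multiplicity equal to $\dim \mathcal{H}_k$; and the second root of the characteristic equation at each level $k$ scales like $1/\epsilon$, hence for $\epsilon$ small lies above every eigenvalue of the form $k' - O(\epsilon^{2k'+n-2})$. Thus the distinct eigenvalues are ordered by the harmonic degree $k$, confirming the labeling and yielding the asymptotic expansion for $\sigma_k$, with the $k=1$ case giving $\sigma_1 = 1 - \tfrac{n}{n-1}\epsilon^n + O(\epsilon^{n+1})$.

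Finally, for the normalized comparison with the ball, I would use $\sigma_k(\mathbb{B}^n_1) = k$ and $|\partial \mathbb{B}^n_1| = |\mathbb{S}^{n-1}|$, together with $|\partial(\mathbb{B}^n_1\setminus \mathbb{B}^n_\epsilon)| = |\mathbb{S}^{n-1}|(1+\epsilon^{n-1})$. Expanding,
\[
 \sigma_k(\mathbb{B}^n_1\setminus \mathbb{B}^n_\epsilon) \, |\partial(\mathbb{B}^n_1\setminus \mathbb{B}^n_\epsilon)|^{\frac{1}{n-1}} = k\,|\mathbb{S}^{n-1}|^{\frac{1}{n-1}} \Bigl(1 + \tfrac{1}{n-1}\epsilon^{n-1} - \tfrac{2k+n-2}{k+n-2}\epsilon^{2k+n-2} + \cdots\Bigr),
\]
and since $2k+n-2 \geq n > n-1$ for every $k \geq 1$, the $\epsilon^{n-1}$ term from the boundary dominates and is positive; strict inequality follows for $\epsilon$ small. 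The main obstacle is bookkeeping: keeping the signs straight in the characteristic equation and certifying that the next-order terms in $\delta$ really give only $O(\epsilon^{2k+n-1})$ error, so that the leading coefficient of the correction to $\sigma_k$ is exactly as stated.
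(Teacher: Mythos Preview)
Your proposal is correct and follows essentially the same route as the paper: separation of variables in spherical coordinates, reduction to a $2\times 2$ system in the radial coefficients, a characteristic equation for $\sigma$, and then an asymptotic expansion together with the boundary-volume comparison. The only difference is cosmetic: the paper extracts the root by explicitly applying the quadratic formula and expanding $\sqrt{B^2-4AC}$, whereas you write $\sigma=k+\delta$ and read off the leading balance (with the implicit function theorem supplying the $O(\epsilon^{2k+n-1})$ remainder); your version is a bit more streamlined but otherwise equivalent.
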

\begin{proof}
The outward unit normal vector on $\partial \mathbb{B}^n_1$ is given by $\eta=\der{\rho}$ and on $\partial \mathbb{B}^n_\epsilon$ by $\eta=-\der{\rho}$. To compute the Dirichlet-to-Neumann spectrum we separate variables and look for harmonic functions of the form $u(\rho,\phi_1,\ldots,\phi_{n-1})=\alpha(\rho) \beta(\phi_1, \ldots, \phi_{n-1})$. By standard methods if $n > 2$ we obtain solutions for any nonnegative integer $k$ given by
\[
         \alpha(\rho)=a\rho^k+b\rho^{-k+2-n}.
\]
In order to be an eigenfunction of the Dirichlet-to-Neumann map we must have $u_\eta=\sigma u$ on $\partial(\mathbb{B}^n_1\setminus \mathbb{B}^n_\epsilon)$, or
$\alpha'(1)=\sigma \alpha(1)$ on $\partial \mathbb{B}^n_1$ and 
$\alpha'(\epsilon)=-\sigma \alpha(\epsilon)$ on $\partial \mathbb{B}^n_\epsilon$. For each nonnegative integer $k$ the conditions become
\begin{align*}
      a k + b (-k+2-n)&=\sigma (a+b) \\
      ak\epsilon^{k-1}+b(-k+2-n)\epsilon^{-k+1-n}
      &=-\sigma(a\epsilon^k+b\epsilon^{-k+2-n}).
\end{align*}
Factoring out $a$ and $b$ these become
\begin{align*}
      a(k-\sigma)&=b(\sigma+k-2+n) \\
      a(k\epsilon^{k-1}+\sigma\epsilon^k)
      &=b(-\sigma \epsilon^{-k+2-n}+(k-2+n)\epsilon^{-k+1-n}).
\end{align*}
Using the first equation to eliminate $a$ and dividing by $b$ (which must be nonzero) we get 
\[
         \frac{\sigma+k-2+n}{k-\sigma}(k\epsilon^{k-1}+\sigma\epsilon^k)
         =-\sigma\epsilon^{-k+2-n}+(k+n-2)\epsilon^{-k+1-n},
\]
which gives the quadratic equation for $\sigma$
\begin{align*}
     \sigma^2(\epsilon^k-\epsilon^{-k+2-n})
     &+\sigma(k\epsilon^{k-1}+(k-2+n)\epsilon^k+k\epsilon^{-k+2-n})
                         +(k+n-2)\epsilon^{-k+1-n}) \\
     &+(k+n-2)k(\epsilon^{k-1}-\epsilon^{-k+1-n})=0.
\end{align*}
Multiplying through by $\epsilon^{k+n-1}$ we may rewrite this as
\begin{align*}
     \sigma^2(\epsilon-\epsilon^{2k+n-1})
     & -\sigma((k+n-2)\epsilon^{2k+n-1}+k\epsilon^{2k+n-2}+k\epsilon)+k+n-2) \\
     & +(k+n-2)k(1-\epsilon^{2k+n-2})=0.
\end{align*}
Letting $A$, $B$, $C$ be the coefficients in this quadratic equation, $A\sigma^2+B\sigma+C=0$, we calculate that
\begin{align*}
    B&^2-4AC  \\ &=
    (k+n-2)^2 -2k(k+n-2)\epsilon +k^2 \epsilon^2 + 2k(k+n-2)\epsilon^{2k+n-2} 
                   + c(n,k) \epsilon^{2k+n-1} + O(\epsilon^{2k+n}) \\
          &= (k+n-2-k\epsilon)^2 + 2k(k+n-2)\epsilon^{2k+n-2} 
                + c(n,k) \epsilon^{2k+n-1} + O(\epsilon^{2k+n}) \\
          &= (k+n-2-k\epsilon)^2 \left[ 1+ \frac{2k(k+n-2)}{D^2}\epsilon^{2k+n-2} 
                + \frac{c(n,k)}{D^2} \epsilon^{2k+n-1} + O(\epsilon^{2k+n}) \right]
\end{align*}  
where $c(n,k)=2k^2+2(k+n-2)^2+8k(k+n-2)$ and $D=k+n-2-k\epsilon$. Using the expansion $\sqrt{1+x}=1+\frac{1}{2}x-\frac{1}{8}x^2 + \cdots$, we have
\begin{align*}
     \sqrt{B^2-4AC}
     &=D\left[ 1+\frac{1}{2}\left( \frac{2k(k+n-2)}{D^2}\epsilon^{2k+n-2} 
                + \frac{c(n,k)}{D^2} \epsilon^{2k+n-1}\right) + O(\epsilon^{2k+n}) \right] \\
     &=D+\frac{k(k+n-2)}{D}\epsilon^{2k+n-2} 
                + \frac{c(n,k)}{2D} \epsilon^{2k+n-1} + O(\epsilon^{2k+n}).
\end{align*}
Now since
\[
      \frac{1}{D}=\frac{1}{k+n-2-k\epsilon}
      =\frac{1}{k+n-2}\frac{1}{1-\frac{k}{k+n-2}\epsilon}
      =\frac{1}{k+n-2}\left[ 1 + \frac{k}{k+n-2} \epsilon + O(\epsilon^2) \right],
\]
we get that
\begin{align*}
   & \sqrt{B^2-4AC}  \\
   & \quad =k+n-2-k\epsilon + k \left[ 1+\frac{k}{k+n-2}\epsilon \right] \epsilon^{2k+n-2}
      +\frac{c(n,k)}{2(k+n-2)} \epsilon^{2k+n-1} +O(\epsilon^{2k+n}) \\
   & \quad = k+n-2-k\epsilon +k\epsilon^{2k+n-2}
     +\left[\frac{k^2}{k+n-2}+\frac{c(n,k)}{2(k+n-2)}\right] \epsilon^{2k+n-1} 
     +O(\epsilon^{2k+n}).
\end{align*}
Set 
\[
    c'(n,k):= \frac{k^2}{k+n-2}+\frac{c(n,k)}{2(k+n-2)}
    =\frac{2k^2+(k+n-2)^2+4k(k+n-2)}{k+n-2}.
\]
Also, $A=\epsilon(1-\epsilon^{2k+n-2})$, and
\[
     \frac{1}{A}=\frac{1}{\epsilon(1-\epsilon^{2k+n-2})}
     =\epsilon^{-1}(1+\epsilon^{2k+n-2}+O(\epsilon^{4k+2n-4})).
\]
Using this, we see that the quadratic equation for $\sigma$ has roots
\begin{align*}
    \sigma&=\frac{1}{2}(\epsilon^{-1}+\epsilon^{2k+n-3})
    \Big[\;(k +n-2)\epsilon^{2k+n-1}+k\epsilon^{2k+n-2}+k\epsilon+k+n-2  \\
    & \qquad \qquad \qquad \qquad \quad \pm \Big( k+n-2 -k\epsilon +k\epsilon^{2k+n-2}
                        +c'(n,k) \epsilon^{2k+n-1}+O(\epsilon^{2k+n}) \Big) \; \Big].
\end{align*}
Hence there are two positive roots $\sigma_k^{(1)}<\sigma_k^{(2)}$ given by
\begin{align*}
    \sigma_k^{(2)} & = O(\epsilon^{-1}) \\
    \sigma_k^{(1)} &=\frac{1}{2}(\epsilon^{-1}+\epsilon^{2k+n-2})
    \Big[\;2k\epsilon+(k+n-2-c'(n,k))\epsilon^{2k+n-1}+O(\epsilon^{2k+n}) \Big] \\
    &=k+\frac{1}{2}(k+n-2-c'(n,k))\epsilon^{2k+n-2}
         +k\epsilon^{2k+n-2}+O(\epsilon^{2k+n-1}) \\
    &=k-\frac{k(2k+n-2)}{k+n-2} \epsilon^{2k+n-2} +O(\epsilon^{2k+n-1}).
\end{align*}
For any given $k$ and $\epsilon$ sufficiently small, we see that $\sigma_k^{(2)}$ is the $k$-th {\bf distinct} eigenvalue of $\mathbb{B}^n_1 \setminus \mathbb{B}^n_\epsilon$, and the multiplicity of the the $k$-th eigenvalue of $\mathbb{B}^n_1 \setminus \mathbb{B}^n_\epsilon$ and of $\mathbb{B}^n_1$ must be the same.
For $\mathbb{B}^n_1$, it is well known that the $k$-th {\bf distinct} nonzero Steklov eigenvalue is $\sigma_k(\mathbb{B}^n_1)=k$. On the other hand, 
\begin{align*}
       |\partial (\mathbb{B}^n_1 \setminus \mathbb{B}^n_\epsilon)|^{\frac{1}{n-1}}
       &=|\partial \mathbb{B}^n_1|^{\frac{1}{n-1}}(1+\epsilon^{n-1})^{\frac{1}{n-1}} \\
       &=|\partial \mathbb{B}^n_1|^{\frac{1}{n-1}}\left(1+\frac{1}{n-1}\epsilon^{n-1}+O(\epsilon^{2n-2})\right).
\end{align*}
Therefore,
\begin{align*}
      \sigma_k(& \mathbb{B}^n_1  \setminus \mathbb{B}^n_\epsilon) \; 
      |\partial (\mathbb{B}^n_1 \setminus \mathbb{B}^n_\epsilon)|^{\frac{1}{n-1}} \\
      &= \left(k-\frac{k(2k+n-2)}{k+n-2} \epsilon^{2k+n-2} +O(\epsilon^{2k+n-1})\right)
            \,|\partial \mathbb{B}^n_1|^{\frac{1}{n-1}}\left(1+\frac{1}{n-1}\epsilon^{n-1}+O(\epsilon^{2n-2})\right) \\
      &= |\partial \mathbb{B}^n_1|^{\frac{1}{n-1}}
         \left(k +\frac{k}{n-1} \epsilon^{n-1} +O(\epsilon^n) \right) \\
      &> k|\partial \mathbb{B}^n_1|^{\frac{1}{n-1}} \\
      &= \sigma_k(\mathbb{B}^n_1) |\partial \mathbb{B}^n_1|^{\frac{1}{n-1}}
\end{align*}
where the inequality follows if $\epsilon$ is sufficiently small.      
\end{proof}

\section{Boundary connectedness in dimension at least 3} \label{section:surgery}

Let $(\Omega,g)$ be a compact, connected $n$-dimensional Riemannian manifold with boundary 
$\partial \Omega \neq \emptyset$, $n \geq 3$. The main theorem of this section shows
that $\Omega$ can be approximated by a connected subdomain with connected boundary so
that all three quantities $|\Omega|$, $|\partial\Omega|$, and $\sigma_1(\Omega)$
are changed by an arbitrarily small amount.

\begin{customthm}{\ref{theorem:connected-boundary}} 
Given $\epsilon>0$, there exists a domain $\Omega_\epsilon \subset \Omega$ with connected boundary and such that
\[
      |\Omega|-|\Omega_\epsilon|<\epsilon,\ ||\partial\Omega|-|\partial\Omega_\epsilon||<\epsilon,
      \ \mbox{and}\ |\sigma_1(\Omega)-\sigma_1(\Omega_\epsilon)|<\epsilon. 
\]
\end{customthm}
Since $\Omega$ has smooth boundary, we may extend $(\Omega,g)$ to a manifold $(M,g)$ so that
$\Omega$ is a domain compactly contained in $M$. Given points $p, \, q \in \partial \Omega$, let $\gamma: [0,l] \rightarrow M$ be a unit speed curve from $p$ to $q$ meeting $\partial \Omega$ orthogonally at $p$ and $q$. Consider Fermi coordinates $t, \, r, \, \theta^1, \ldots, \theta^{n-2}$ about $\gamma$, such that $t$ is the arclength parameter along $\gamma$, and $r, \, \theta_1, \ldots, \theta_{n-2}$ are geodesic normal coordinates on the slices $t=$constant. Assume that $\gamma$ extends beyond $p$ and $q$ so that 
\[
      \{ x \in M : d(x, \gamma) < \delta \} \cap \{ t = 0 \mbox{ or } l \} \cap \mbox{int} M = \emptyset
\]
for all $\delta \leq \delta_0$, for some fixed small $\delta_0 > 0$.
Let
\[
       T_\delta=\{ x \in \Omega : d(x, \gamma) = \delta \}
\]
and let 
\[
       \Omega_\delta=\Omega \setminus \{ x \in \Omega : d(x, \gamma) \leq \delta \}.
\]
\begin{proposition} \label{proposition:tube}
$\lim_{\delta \rightarrow 0} \sigma_1(\Omega_\delta)=\sigma_1(\Omega).$
\end{proposition}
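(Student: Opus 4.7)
The plan is to bracket $\sigma_1(\Omega_\delta)$ between $\sigma_1(\Omega)-o(1)$ and $\sigma_1(\Omega)+o(1)$ using the variational characterization
\[
\sigma_1(\Omega)=\inf\Big\{\int_{\Omega}|\nabla u|^{2}\Big/\int_{\partial\Omega}u^{2} : u\in H^1(\Omega),\ u\not\equiv 0,\ \int_{\partial\Omega}u=0\Big\}
\]
and its analogue on $\Omega_\delta$. For the upper bound, let $u$ be a first Steklov eigenfunction on $\Omega$ normalized by $\int_{\partial\Omega}u=0$ and $\int_{\partial\Omega}u^{2}=1$, and use $u-c_\delta$ as a test function on $\Omega_\delta$, where $c_\delta=|\partial\Omega_\delta|^{-1}\int_{\partial\Omega_\delta}u$. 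The removed tube has volume $O(\delta^{n-1})$, the lateral surface $T_\delta$ has area $O(\delta^{n-2})$, and the portion of $\partial\Omega$ cut out by the tube has area $O(\delta^{n-1})$; all three vanish for $n\geq 3$, and since $u$ is smooth this gives $\int_{\Omega_\delta}|\nabla u|^{2}\to\sigma_1(\Omega)$, $c_\delta\to 0$, and $\int_{\partial\Omega_\delta}(u-c_\delta)^{2}\to 1$, so $\limsup_{\delta\to 0}\sigma_1(\Omega_\delta)\leq\sigma_1(\Omega)$.

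For the lower bound, let $u_\delta$ be a first Steklov eigenfunction on $\Omega_\delta$ with $\int_{\partial\Omega_\delta}u_\delta=0$ and $\int_{\partial\Omega_\delta}u_\delta^{2}=1$; by the upper bound, $\int_{\Omega_\delta}|\nabla u_\delta|^{2}=\sigma_1(\Omega_\delta)$ is uniformly bounded in $\delta$. I would then introduce a radial cutoff $\chi_\delta=\chi_\delta(r)$, $r=d(\cdot,\gamma)$, equal to $0$ on $\{r\leq\delta\}$ and to $1$ outside $\{r\leq\delta_0\}$ for a fixed small $\delta_0$, taken to be the capacitor's potential of $\{r\leq\delta\}$ in $\{r\leq\delta_0\}$: logarithmic in $r$ when $n=3$, and a multiple of $r^{3-n}$ when $n\geq 4$. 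Integrating against the tubular volume element $r^{n-2}\,dr\,dt$ along $\gamma$ yields $\int_M|\nabla\chi_\delta|^{2}=O(1/\log(1/\delta))$ for $n=3$ and $O(\delta^{n-3})$ for $n\geq 4$, both of which vanish; this reflects the vanishing $2$-capacity of the curve $\gamma$ in $M$ whenever $n\geq 3$. Then $\tilde u_\delta:=\chi_\delta u_\delta$, extended by zero across the tube (valid in $H^1(\Omega)$ because $\chi_\delta=0$ on $T_\delta$), is an admissible test function for $\sigma_1(\Omega)$.

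Expanding $|\nabla(\chi_\delta u_\delta)|^{2}$ and applying Cauchy--Schwarz,
\[
\int_{\Omega}|\nabla\tilde u_\delta|^{2}\leq \sigma_1(\Omega_\delta)+2\sigma_1(\Omega_\delta)^{1/2}\Big(\int_{\Omega_\delta} u_\delta^{2}|\nabla\chi_\delta|^{2}\Big)^{1/2}+\int_{\Omega_\delta} u_\delta^{2}|\nabla\chi_\delta|^{2}.
\]
A uniform bound $\|u_\delta\|_{L^\infty(\Omega_\delta)}\leq C$ then gives $\int u_\delta^{2}|\nabla\chi_\delta|^{2}\leq C^{2}\int|\nabla\chi_\delta|^{2}=o(1)$, so the right-hand side is $\sigma_1(\Omega_\delta)+o(1)$; the same bound gives $\int_{T_\delta}u_\delta^{2}\leq C^{2}|T_\delta|=O(\delta^{n-2})\to 0$, and combined with the smallness of the caps this yields $\int_{\partial\Omega}\tilde u_\delta^{2}\to 1$ and $\bar c_\delta:=|\partial\Omega|^{-1}\int_{\partial\Omega}\tilde u_\delta\to 0$. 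Plugging $\tilde u_\delta-\bar c_\delta$ into the Rayleigh quotient for $\sigma_1(\Omega)$ then produces $\sigma_1(\Omega)\leq\sigma_1(\Omega_\delta)+o(1)$, the complementary bound.

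The main obstacle is the uniform $L^\infty$ bound on $u_\delta$. Because $\gamma$ meets $\partial\Omega$ orthogonally at $p$ and $q$, $T_\delta\cap\partial\Omega$ is a codimension-two dihedral edge with right interior angle and the family $\{\Omega_\delta\}$ is uniformly Lipschitz as $\delta\to 0$; on such a family Moser iteration (or Dahlberg--Kenig type boundary estimates) applied to the harmonic function $u_\delta$ with the Steklov condition yields $\|u_\delta\|_{L^\infty(\Omega_\delta)}\leq C\|u_\delta\|_{L^2(\partial\Omega_\delta)}$ with $C$ independent of $\delta$. A secondary delicate point is the borderline case $n=3$, where the $2$-capacity of $\gamma$ vanishes only at the logarithmic rate $1/\log(1/\delta)$, but this rate is still enough to drive the estimates.
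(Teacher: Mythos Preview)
Your two-sided variational strategy matches the paper's, and your upper bound is actually cleaner than the paper's: the paper multiplies the first eigenfunction $v$ of $\Omega$ by a logarithmic cutoff before testing on $\Omega_{\delta^2}$, whereas you simply restrict $u$ and subtract its boundary mean, which is enough since $u$ is smooth.

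The lower bound is where the approaches diverge. The paper does \emph{not} produce a test function on $\Omega$ out of $u_\delta$; instead it uses interior and boundary elliptic estimates to pass to a $C^2$ limit $u$ on $\Omega\setminus\gamma$, invokes Lemma~\ref{lemma:noconcentration} (an energy-only estimate showing $\|u_\delta\|_{L^2(T_\delta)}\to 0$ from the uniform $W^{1,2}$ bound) to ensure $\|u\|_{L^2(\partial\Omega)}=1$, and then runs a removable-singularity argument with a logarithmic cutoff to extend $u$ across $\gamma$ as a genuine Steklov eigenfunction on $\Omega$. Crucially, no $L^\infty$ control of $u_\delta$ is ever used.

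Your route is more direct but hangs entirely on the uniform bound $\|u_\delta\|_{L^\infty(\Omega_\delta)}\leq C$, which you use twice: to kill $\int u_\delta^2|\nabla\chi_\delta|^2$ and to kill $\int_{T_\delta}u_\delta^2$. Your justification (``uniformly Lipschitz, so Moser iteration'') is not quite sufficient as stated: while the Lipschitz constant at the dihedral edge is indeed uniform, the \emph{scale} at which $\partial\Omega_\delta$ is a Lipschitz graph near $T_\delta$ shrinks like $\delta$, and it is exactly this scale that enters the trace and Sobolev constants driving the iteration. The bound is plausible (after rescaling by $1/\delta$ the Steklov parameter becomes $\sigma\delta\to 0$, i.e.\ nearly Neumann), but it needs a real argument. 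The paper's Lemma~\ref{lemma:noconcentration} sidesteps this entirely, getting $\int_{T_\delta}u_\delta^2\to 0$ from the $W^{1,2}$ bound alone via a comparison with the harmonic replacement on slices; this is the conceptual core of the paper's proof and is what buys robustness. A minor secondary issue: with $\delta_0$ fixed you do not get $\int_{\partial\Omega}\tilde u_\delta^2\to 1$, only $\geq 1-C\delta_0^{n-1}-o(1)$; you need to send $\delta_0\to 0$ after $\delta\to 0$ (or diagonalize).
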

The following lemma will be important later, since it implies that for a sequence of eigenfunctions, the $L^2$-norm on the boundary $\partial \Omega_\delta$ doesn't concentrate on the neck $T_\delta$ as $\delta \rightarrow 0$. 
\begin{lemma} \label{lemma:noconcentration}
If there are constants $\delta_0>0$, $C>0$ and a family of functions $u_\delta \in W^{1,2}(\Omega_\delta)$ with $\|u_\delta\|_{W^{1,2}(\Omega_\delta)} \leq C$ for $\delta \in (0,\delta_0)$, then
\[
      \lim_{\delta \rightarrow 0} \|u_\delta \|_{L^2(T_\delta)}=0.
\]
\end{lemma}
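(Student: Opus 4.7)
The plan is to work in Fermi coordinates $(t,r,\theta^1,\ldots,\theta^{n-2})$ about $\gamma$, in which, on a fixed tubular neighborhood $\{r\le R\}$ with $R\le \delta_0$, the metric is uniformly comparable to $dt^2+dr^2+r^2\,g_{S^{n-2}}$, the volume element has the form $r^{n-2}\sqrt{\det g_{S^{n-2}}}\,(1+O(r))\,dt\,dr\,d\theta$, and the induced area element on $T_\delta=\{r=\delta\}$ is $\delta^{n-2}\sqrt{\det g_{S^{n-2}}}\,(1+O(\delta))\,dt\,d\theta$. I would use the fundamental theorem of calculus in the radial direction to control the trace of $u_\delta$ on $T_\delta$ by values of $u_\delta$ and $\partial_r u_\delta$ on the shell $N_{\delta,R}:=\{\delta\le r\le R\}\subset \Omega_\delta$, and then exploit that $n\ge 3$ forces the induced measure of $T_\delta$ to collapse fast enough to kill the bound.

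Concretely, for $(t,\theta)$ fixed and any $r\in(\delta,R)$, write $u_\delta(t,\delta,\theta)=u_\delta(t,r,\theta)-\int_\delta^r \partial_s u_\delta(t,s,\theta)\,ds$ and apply Cauchy--Schwarz with the weight $s^{n-2}$ to obtain
\[
\Bigl(\int_\delta^r \partial_s u_\delta\,ds\Bigr)^2
\le K(\delta,R)\int_\delta^R (\partial_s u_\delta)^2\,s^{n-2}\,ds,
\qquad K(\delta,R):=\int_\delta^R s^{-(n-2)}\,ds.
\]
Squaring and using $(a+b)^2\le 2a^2+2b^2$ yields, for every $r\in(\delta,R)$,
\[
|u_\delta(t,\delta,\theta)|^2\le 2\,|u_\delta(t,r,\theta)|^2+2\,K(\delta,R)\int_\delta^R (\partial_s u_\delta)^2\,s^{n-2}\,ds.
\]
Multiply by $r^{n-2}$ and integrate over $r\in(\delta,R)$; the prefactor $\int_\delta^R r^{n-2}\,dr$ stays bounded below by a positive constant as $\delta\to 0$, so after dividing we get
\[
|u_\delta(t,\delta,\theta)|^2
\le C\int_\delta^R |u_\delta(t,r,\theta)|^2 r^{n-2}\,dr
 + C\,K(\delta,R)\int_\delta^R (\partial_s u_\delta(t,s,\theta))^2 s^{n-2}\,ds.
\]

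Now multiply by $\delta^{n-2}\sqrt{\det g_{S^{n-2}}}$ and integrate over $(t,\theta)\in[0,l]\times S^{n-2}$. The left-hand side becomes (up to a factor $1+O(\delta)$) exactly $\|u_\delta\|_{L^2(T_\delta)}^2$, while the two terms on the right are bounded by $C\delta^{n-2}\|u_\delta\|_{L^2(N_{\delta,R})}^2$ and $C\,\delta^{n-2}K(\delta,R)\,\|\nabla u_\delta\|_{L^2(N_{\delta,R})}^2$ respectively. Direct computation gives $K(\delta,R)=\log(R/\delta)$ when $n=3$ and $K(\delta,R)\le \tfrac{1}{n-3}\delta^{-(n-3)}$ when $n\ge 4$, so in every dimension $n\ge 3$ one has $\delta^{n-2}K(\delta,R)\le C\delta\,(1+|\log\delta|)\to 0$ as $\delta\to 0$. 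Combined with the hypothesis $\|u_\delta\|_{W^{1,2}(\Omega_\delta)}\le C$, this gives $\|u_\delta\|_{L^2(T_\delta)}\to 0$.

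The main obstacle is bookkeeping rather than ideas: the weight $s^{n-2}$ in Cauchy--Schwarz must be chosen to reproduce the correct Riemannian volume element so that the resulting right-hand side is a bona fide $W^{1,2}$ norm on the shell, and one must handle the dimension $n=3$ (where $\int s^{-1}ds$ is logarithmic) and $n\ge 4$ (where it is a negative power) uniformly. The decisive cancellation is between the shrinking measure $\delta^{n-2}$ of $T_\delta$ and the blow-up $K(\delta,R)$ of the weighted radial integral; that this cancellation still leaves a power of $\delta$ is precisely where the hypothesis $n\ge 3$ is used.
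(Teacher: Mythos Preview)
Your argument is correct and reaches the same decay rates as the paper ($\delta\log(1/\delta)$ when $n=3$, and $\delta$ when $n\ge 4$), but by a different and more elementary route. The paper first extends $u_\delta$ across $\partial\Omega$ and localizes with a radial cutoff $\zeta$, then on each slice $\{t=t_0\}$ replaces $v_\delta=\zeta u_\delta$ by the harmonic function $h$ on the annulus $D_{r_0}\setminus D_\delta$ sharing its boundary values, applies the divergence theorem to $h^2$, and integrates the resulting differential inequality in the radius~$\sigma$; this produces a slice estimate involving only the Dirichlet energy of the localized function. You instead use the fundamental theorem of calculus in $r$, a weighted Cauchy--Schwarz with weight $s^{n-2}$, and an averaging over $r\in(\delta,R)$, which is shorter and makes the cancellation between the area factor $\delta^{n-2}$ and the weight integral $K(\delta,R)$ (and hence the role of $n\ge 3$) completely transparent. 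One small point worth adding: near the endpoints $t=0,l$ your shell $N_{\delta,R}$ need not lie entirely inside $\Omega_\delta$, since $\gamma$ terminates on $\partial\Omega$; the paper handles this by a preliminary Sobolev extension of $u_\delta$ to a slightly larger domain $\tilde\Omega\supset\Omega$ with comparable $W^{1,2}$ norm, and you should invoke the same device so that the radial integration is legitimate for all $(t,\theta)$.
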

\begin{proof}
We may assume that the functions $u_\delta$ are defined on a neighborhood of the curve $\gamma$ on a larger domain $\tilde{\Omega}$ containing $\Omega$ and such that $\|u_\delta\|_{W^{1,2}(\tilde{\Omega}_\delta)} \leq C$.

We can also localize the support of $u_\delta$ to lie near the curve. Precisely, we choose
a number $r_0>0$ so that the the coordinates $(t,r,\theta)$ exist on the $r_0$ neighborhood of 
$\gamma$ and so that the metric is uniformly equivalent to the product metric $(0,l)\times D_{r_0}\setminus D_\delta$
given by $dt^2+dr^2+r^2g_{n-2}$ where $g_{n-2}$ denotes the standard metic on $S^{n-2}$
and $D_\sigma$ denotes the ball of radius $\sigma$ centered at the origin of $\mathbb R^{n-1}$. We choose
a cutoff function $\zeta(r)$ which is $1$ for $r\leq r_0/2$ and zero for $r\geq r_0$ and let
$v_\delta=\zeta u_\delta$. We then have by the Schwarz and arithmetic geometric mean inequalities
\[
|\nabla v_\delta|^2=u_\delta^2|\nabla \zeta|^2+2u_\delta \zeta\langle \nabla u_\delta,\nabla\zeta
\rangle+\zeta^2|\nabla u_\delta|^2\leq 2(\zeta^2|\nabla u_\delta|^2+u_\delta^2|\nabla\zeta|^2).
\]
This implies 
\[ 
\int_{\delta\leq r\leq r_0}|\nabla v_\delta|^2\leq c\int_{\Omega_\delta}(|\nabla u_\delta|^2+u_\delta^2)
\]
for a constant $c$ depending on $r_0$. Note that $r_0$ is fixed depending only on the geometry
and we will choose $\delta$ much smaller that $r_0$.

Thus to prove the lemma it suffices to show that for any $\epsilon>0$
\[
\int_{T_\delta}u_\delta^2=\int_{T_\delta}v_\delta^2\leq \epsilon \int_{\Omega_\delta}|\nabla v_\delta|^2
\]
for $\delta$ sufficiently small. Furthermore since the metric is uniformly equivalent to the
euclidean product metric on the support of $v_\delta$ it suffices the prove this estimate
for the product metric. This is what we shall do.

For a fixed $t_0\in(0,l)$ we consider the restriction which we denote by $v$, $v(r,\theta)=v_\delta(t_0,r,\theta)$ on the annulus $D_{r_0}\setminus D_\delta$ in $\mathbb R^{n-1}$. 
Choose $h$ to be the harmonic function of the annulus $D_{r_0}\setminus D_\delta$
\begin{align} \label{equation:harmonic}
         \Delta h &=0 
         \quad \mbox{ on } \, D_{r_0} \setminus D_{\delta} \no \\
         h &= v = 0  \quad \mbox{ on } \,  \partial D_{r_0} \\
         h &= v  \quad \mbox{ on } \,  \partial D_{\delta}. \no
\end{align}
By the Dirichlet minimizing property of $h$ we then have
\[
\int_{D_{r_0}\setminus D_\delta}|\nabla h|^2\leq \int_{D_{r_0}\setminus D_\delta}|\nabla v|^2.
\]
For any $\sigma$ with $\delta \leq \sigma \leq r_0$ we have by the divergence theorem
\[
     \int_{\partial D_{r_0}} \fder{h^2}{r} - \int_{\partial D_\sigma} \fder{h^2}{r}
     =\int_{D_{r_0} \setminus D_\sigma} \Delta h^2,
\]
and so from (\ref{equation:harmonic}) we get
\begin{align*}
     - \int_{\partial D_\sigma} \fder{h^2}{r} 
     &\leq 2 \int_{D_{r_0} \setminus D_\sigma} | \nabla h |^2  \\
     & \leq 2 \int_{D_{r_0} \setminus D_\sigma} | \nabla v |^2.  \\
  \end{align*}
Since we are working with respect to the standard metric on $\mathbb R^{n-1}$ the volume
measure on $\partial D_\sigma$ is $\sigma^{n-2}$ times that on the unit sphere 
$\partial D_1$. Therefore this may be rewritten
\[
       -\sigma^{n-2}\frac{d}{d\sigma} \left[ \sigma^{2-n}  \int_{\partial D_\sigma} h^2  \right]
       \leq 2 \int_{D_{r_0} \setminus D_\sigma} | \nabla v |^2\leq 2 \int_{D_{r_0} \setminus D_\delta} | \nabla v |^2
\]
since $\sigma\geq \delta$. Now we divide by $\sigma^{n-2}$ and integrate this with respect to $\sigma$
on the interval $[\delta,r_0]$ to obtain (note that $h=v$ on $\partial D_\delta$ and $h=0$ on
$\partial D_{r_0}$)
\[
       \delta^{2-n}\int_{\partial D_\delta}v^2\leq 2\left(\int_\delta^{r_0}\sigma^{2-n}d\sigma\right)\int_{D_{r_0} \setminus D_\delta} | \nabla v |^2.
 \]
This implies 
\[
\int_{\partial D_\delta}v^2\leq \epsilon_n(\delta)\int_{D_{r_0} \setminus D_\delta} | \nabla v |^2
\]
where $\epsilon_3(\delta)=2\delta\log(r_0/\delta)$ and $\epsilon_n(\delta)=\frac{2}{n-3}\delta$
for $n\geq 4$.

Written back in terms of $v_\delta$ this says that for each $t_0\in (0,l)$ we have
\[
\int_{\partial D_\delta}v_\delta(t_0,\delta,\theta)^2\leq \epsilon_n(\delta)\int_{D_{r_0} \setminus D_\delta} | \nabla^{n-1} v_\delta(t_0,r,\theta) |^2
\]
where we have used $\nabla^{n-1}$ to emphasize that the derivative is taken only along $t=t_0$.
We now integrate over $t_0\in (0,l)$ to obtain
\[
\int_{T_\delta}v_\delta^2\leq \epsilon_n(\delta)\int_0^l\int_{D_{r_0} \setminus D_\delta} | \nabla^{n-1} v_\delta(t_0,r,\theta) |^2\leq \epsilon_n(\delta)\int_{\Omega_\delta}|\nabla v_\delta|^2
\]
where we have used the inequality $|\nabla^{n-1}v_\delta|^2\leq |\nabla v_\delta|^2$.
Since $\epsilon_n(\delta)$ goes to $0$ as $\delta$ goes to $0$, we have completed the proof
with respect the euclidean product metric on $[0,l]\times (D_{r_0}\setminus D_\delta)$. As
discussed above this implies the result for the original metric and for any function $u_\delta$
in $W^{1,2}(\Omega_\delta)$. 
\end{proof}     

\begin{proof}[Proof of Proposition \ref{proposition:tube}]
Let $u_\delta$ be a first Steklov eigenfunction of $\Omega_\delta$ with eigenvalue $\sigma_1(\Omega_\delta)$, with $\|u_\delta\|_{L^2(\partial \Omega_\delta)}=1$. Then, 
\begin{align*}
       \begin{cases}
       \Delta u_\delta =0 & \mbox{ on } \Omega_\delta \\
       \fder{u_\delta}{\nu}=\sigma_1(\Omega_\delta) \, u _\delta & \mbox{ on } \partial \Omega_\delta.
       \end{cases}
\end{align*}

We first show that $\sigma_1(\Omega_\delta)$ is bounded from above independent of $\delta$
for $\delta$ small. To see this we use the variational characterization of $\sigma_1$
\[ 
    \sigma_1(\Omega_\delta)
  =\inf \{ \frac{\int_{\Omega_\delta}|\nabla f|^2}{\int_{\partial\Omega_\delta}f^2}:\ \int_{\partial\Omega_\delta }f=0\}
\]
where the infimum is taken over functions $f\in W^{1,2}(\Omega_\delta)$. Thus to get an upper 
bound we need only exhibit functions which integrate to $0$ over the boundary of $\Omega_\delta$
having bounded Rayleigh quotient. We can do this by choosing a {\it fixed} function which
is supported away from the tube region and so is a valid test function for any small $\delta$.

Elliptic boundary value estimates (\cite{M}) give bounds on $u_\delta$ and its derivatives up to $\partial \Omega_\delta$. There exists a sequence $u_{\delta_i}$ that converges in $C^2(K)$ on compact subsets $K \subset \Omega \setminus \gamma$ to a harmonic function $u$ on $\Omega \setminus \gamma$, satisfying 
\[
       \fder{u}{\nu}=\sigma u 
       \quad \mbox{ on } \quad \partial \Omega \setminus \{p,\, q\},
\]
with $\sigma =\lim_{i \rightarrow \infty} \sigma_1(\Omega_{\delta_i})$. Since $u_{\delta_i}$ converges to $u$ in $C^2(K)$ on compact subsets $K \subset \Omega \setminus \gamma$, there exists $C>0$ such that $\|u_{\delta_i}\|_{W^{1,2}(\Omega_{\delta_i})} \leq C$. By Lemma \ref{lemma:noconcentration}, $\lim_{i \rightarrow \infty} \|u_{\delta_i} \|_{L^2(T_{\delta_i})}=0$, and since $\|u_{\delta_i}\|_{L^2(\partial \Omega_{\delta_i})}=1$, $\|u\|_{L^2(\partial \Omega)} =1$. 

We now show that $u$ extends to a Steklov eigenfunction on $\Omega$. Consider the following logarithmic cut-off function about the curve $\gamma$,
\begin{equation} \label{equation:cut-off function}
 	\varphi_{\delta} =\left\{ \begin{array} {ll} 0 & r \le \delta^2  \\ 
	                   \frac{\log r - \log \delta^2 }{- \log \delta} & \delta^2 \le  r \le \delta \\ 
	                  1 & \delta \le r  \end{array} \right..
 \end{equation}
By the definition of $\varphi_\delta$, with respect to the product metric (see proof of Lemma \ref{lemma:noconcentration}) we have
\begin{align}
     \int_{\Omega}  |\nabla \varphi_\delta|^2
     &\leq \int_0^l\left(\int_{D_\delta \setminus D_{\delta^2}}  |\nabla \varphi_\delta|^2 \right) dt  \no \\
     &= \frac{C(n)}{(\log \delta)^2} \int_0^l 
                 \left(\int_{\delta^2}^{\delta} \frac{1}{r^2} \, r^{n-2}  \, dr \right) \, dt  \no \\
     &= \frac{C(n) \, l}{(\log \delta)^2}  \int_{\delta^2}^\delta r^{n-4} \, dr  \\
     &= \frac{C(n) \, l}{(\log \delta)^2} \, \cdot 
              \left\{ \begin{array}{ll}
           - \log \delta & \mbox{ if } n=3 \no \\
           & \\
           \frac{\delta^{n-3}(1-\delta^{n-3})}{n-3} & \mbox{ if } n > 3 
           \end{array} \right. \no \\           
     &= C(n) l \, \cdot 
              \left\{ \begin{array}{ll}
           - \frac{1}{\log \delta} & \mbox{ if } n=3 \\
           & \\
           \frac{1}{(\log \delta)^2}\frac{\delta^{n-3}(1-\delta^{n-3})}{n-3} & \mbox{ if } n > 3 
           \end{array} \right. \no \\  
     &\rightarrow 0 \mbox{ as } \delta \rightarrow 0. \label{equation:cut-off}
\end{align} 
Since the metric is uniformly equivalent to the product metric (see proof of Lemma \ref{lemma:noconcentration}), 
$\int_{\Omega}  |\nabla \varphi_\delta|^2 \rightarrow 0$ as $\delta \rightarrow 0$.
Let $\psi \in W^{1,2} \cap L^\infty (\Omega)$ and let $\psi_\delta=\varphi_\delta \psi$. Since $u$ is a Steklov eigenfunction with eigenvalue $\sigma$ on 
$\Omega \setminus \gamma$,
\begin{equation} \label{equation:weak-steklov}
    \int_{\Omega \setminus \gamma} \nabla u \nabla \psi_\delta
    = \sigma \int_{\Omega \setminus \gamma} u \psi_\delta.
\end{equation}
By (\ref{equation:cut-off}) and H\"older's inequality,
\[
     \int_{\Omega} \psi \nabla u \nabla \varphi_\delta \rightarrow 0
     \qquad \mbox{ as } \delta \rightarrow 0.
\]
Since $|\psi_\delta| \leq |\psi| \in L^\infty$ and $\psi_\delta \rightarrow \psi$ a.e., by the dominated convergence theorem, taking the limit of (\ref{equation:weak-steklov}) as $\delta \rightarrow 0$, we obtain
\[
    \int_{\Omega} \nabla u \nabla \psi
    = \sigma \int_{\partial \Omega} u \psi.      
\]
Therefore, $u$ extends to a Steklov eigenfunction with eigenvalue $\sigma$ on $\Omega$.

Finally, we show that $u$ is a {\em first} eigenfunction of $\Omega$; i.e. $\sigma=\sigma_1(\Omega)$. First, since $u_\delta$ is an eigenfunction corresponding to the first nonzero eigenvalue of $\Omega_\delta$, we have that $\int_{\partial \Omega_\delta} u_\delta=0$. Since $\lim_{\delta \rightarrow 0} \|u_\delta \|_{L^2(T_\delta)}=0$ (by Lemma \ref{lemma:noconcentration}), it follows that
\[
      \int_{\partial \Omega} u
      =\lim_{\delta \rightarrow 0} \int_{\partial \Omega_\delta} u_\delta
      =0.
\]
Therefore, $u$ is nonconstant, and $\sigma \geq \sigma_1(\Omega)$. Let $v$ be a first eigenfunction of $\Omega$ with $\|v\|_{L^2(\partial \Omega)}=1$. Let $\varphi_\delta$ be the logarithmic cut-off function defined by (\ref{equation:cut-off function}), and let
\[
       v_\delta =\varphi_\delta v 
            - \frac{1}{|\partial \Omega_{\delta^2}|} \int_{\partial \Omega_{\delta^2}} \varphi_\delta v.
\]
Then $\int_{\Omega_{\delta^2}} v_\delta=0$, and we will use $v_\delta$ as a test function in the variational characterization of the first nonzero Steklov eigenvalue $\sigma_1(\Omega_{\delta^2})$ of $\Omega_{\delta^2}$.
First note that since $\int_{\partial \Omega} v=0$,
\[
       \int_{\partial \Omega_{\delta^2}} \varphi_\delta v
       =\int_{\partial \Omega \cap \{ r < \delta\} } (\varphi_\delta -1) v.
\]
Then using H\"older's inequality and the fact that $\|v\|_{L^2(\partial \Omega)}=1$, we have
\begin{equation} \label{equation:average2}
     \left|\int_{\partial \Omega_{\delta^2}} \varphi_\delta v\right|
     \leq | \partial \Omega \cap \{ r < \delta\} |^{\frac{1}{2}}.
\end{equation}
Now,
\begin{align*}
     & \int_{\partial \Omega_{\delta^2}}  v_\delta^2 
       = \int_{\partial \Omega_{\delta^2}} (\varphi_\delta v)^2  
        - \frac{1}{|\partial \Omega_{\delta^2}|} \left(\int_{\partial \Omega_{\delta^2}} \varphi_\delta v \right)^2 \\
     &= \int_{\partial \Omega} v^2 
           -\int_{\partial \Omega \cap \{ r < \delta\} } (1-\varphi_\delta^2) v^2
       - \frac{1}{|\partial \Omega_{\delta^2}|} \left(\int_{\partial \Omega_{\delta^2}} \varphi_\delta v \right)^2 \\
    & \geq \int_{\partial \Omega} v^2 
        -C | \partial \Omega \cap \{ r < \delta\} |
        -\frac{| \partial \Omega \cap \{ r < \delta\} |}
                {|\partial \Omega| 
                  - |\partial \Omega \cap \{ r < \delta^2\} |} \\
     &= \int_{\partial \Omega} v^2 -C_1(\delta)        
\end{align*}
where $C_1(\delta) \rightarrow 0$ as $\delta \rightarrow 0$. For the inequality, the constant $C$ in the second term is a pointwise bound on the first eigenfunction $v$, and for the third term we used the estimate (\ref{equation:average2}).
On the other hand, 
\begin{align*}
    \int_{\Omega_{\delta^2}} |\nabla v_\delta |^2
    &= \int_{\Omega_{\delta^2}} |\nabla (\varphi_\delta v) |^2 \\
    &= \int_{\Omega_{\delta}} |\nabla v |^2 
        + \int_{\Omega_{\delta^2} \setminus \Omega_{\delta}} |\nabla (\varphi_\delta v) |^2 \\
    & \leq \int_{\Omega} |\nabla v|^2
        + 4\int_{\Omega_{\delta^2} \setminus \Omega_{\delta}} 
                 \left( |\nabla \varphi_\delta |^2 v^2+\varphi_\delta^2 |\nabla v |^2 \right) \\
    & \leq \int_{\Omega} |\nabla v|^2
         + C \int_{\Omega_{\delta^2} \setminus \Omega_{\delta}} |\nabla \varphi_\delta |^2
         + C |\Omega_{\delta^2} \setminus \Omega_\delta | \\
    & = \int_{\Omega} |\nabla v|^2 + C_2(\delta)
\end{align*}
with $C_2(\delta) \rightarrow 0$ as $\delta \rightarrow 0$, by (\ref{equation:cut-off}) and since $|\Omega_{\delta^2} \setminus \Omega_\delta | \leq |\{ \delta^2 < r < \delta, \, 0 < t < l \}|\rightarrow 0$ as $\delta \rightarrow 0$. Here, in the second inequality, the constant $C$ depends on a pointwise upper bound on $v$ and $|\nabla v|$. 
Combining these estimates, we have 
\[
    \sigma_1(\Omega_{\delta^2}) 
     \leq \frac{\int_{\Omega_{\delta^2}} |\nabla v_\delta |^2}{\int_{\partial \Omega_{\delta^2}}  v_\delta^2} 
     \leq \frac{\int_{\Omega} |\nabla v|^2 + C_2(\delta)}
                     {\int_{\partial \Omega} v^2 -C_1(\delta)}     
     \stackrel{\delta \rightarrow0}{\longrightarrow} \frac{\int_{\Omega} |\nabla v|^2}
                             {\int_{\partial \Omega} v^2} 
    = \sigma_1(\Omega).
\]
It follows that, 
$\sigma=\lim_{\delta \rightarrow 0} \sigma_1(\Omega_{\delta^2}) \leq \sigma_1(\Omega)$. Therefore,
\[
     \lim_{\delta \rightarrow 0} \sigma_1(\Omega_\delta)= \sigma_1(\Omega).
\]
\end{proof}

\begin{proof}[Proof of Theorem \ref{theorem:connected-boundary}]
Let $\Omega$ be a manifold with $b\geq 2$ boundary components. It suffices to construct
a sequence of connected smooth subdomains $\Omega_i$ with connected
boundary so that
\[
\lim_i|\Omega_i|=|\Omega|,\ \lim_i|\partial\Omega_i|=|\partial\Omega|,\ \mbox{and}\ 
\lim_i\sigma_1(\Omega_i)=\sigma_1(\Omega).
\]
To construct $\Omega_i$ we choose $b-1$ nonintersecting curves $\gamma_1, \ldots, \gamma_{b-1}$ which connect boundary components of $\Omega$ and meet $\partial \Omega$ orthogonally. Let $\Omega(\delta)$ be the domain with connected boundary obtained by removing a $\delta$-neighborhood of each of the curves from $\Omega$. Applying Proposition \ref{proposition:tube} finitely many times we obtain a sequence of domains $\Omega(\delta_j)$ with connected boundary, where 
$\delta_j \rightarrow 0$ as $j \rightarrow \infty$, such that
\[
     \lim_{j \rightarrow \infty} \sigma_1(\Omega(\delta_j)) = \sigma_1(\Omega).
\]
Since the $(n-1)$-dimensional volume of each tube $T_\delta$ tends to zero as $\delta \rightarrow 0$, 
\[
      \lim_{j \rightarrow \infty} |\partial \Omega(\delta_j)| \rightarrow |\partial \Omega|
\]
and so
\[
     \lim_{j \rightarrow \infty} \sigma_1(\Omega(\delta_j)) |\partial \Omega(\delta_j)|^{\frac{1}{n-1}}
     = \sigma_1(\Omega) |\partial \Omega |^{\frac{1}{n-1}}.
\]
It is clear that 
\[
\lim_j|\Omega(\delta_j)|=|\Omega|.
\]
Note that we can approximate the domains by smooth domains keeping the eigenvalue
and the volumes nearly constant. This completes the proof of Theorem \ref{theorem:connected-boundary}.
\end{proof}

We now apply Proposition \ref{proposition:tube} to show that the unit ball $\mathbb{B}^n_1$ in $\mathbb{R}^n$ does not maximize the first Steklov eigenvalue among contractible domains in $\mathbb{R}^n$.

\begin{theorem} \label{theorem:contractible}
There exists a family of bounded contractible smooth domains $\Omega_\delta \subset \mathbb{R}^n$, $0 < \delta \ll \epsilon < 1$, degenerating to $\mathbb{B}_1^n \setminus \mathbb{B}_\epsilon^n$ as $\delta \rightarrow 0$, such that
\[
       \lim_{\delta \rightarrow 0} \sigma_1(\Omega_\delta) 
       = \sigma_1(\mathbb{B}_1^n \setminus \mathbb{B}^n_\epsilon)\ \mbox{and}\ 
       \lim_{\delta \rightarrow 0} |\partial\Omega_\delta| 
       = |\partial(\mathbb{B}_1^n \setminus \mathbb{B}^n_\epsilon)|.
\]
\end{theorem}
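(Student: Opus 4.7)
The plan is to apply Proposition \ref{proposition:tube} directly to the annular domain $\Omega = \mathbb{B}_1^n \setminus \mathbb{B}_\epsilon^n$ with $\gamma$ taken to be the radial segment joining the two boundary spheres, then smooth the resulting domain. Concretely I choose $\gamma(t) = (\epsilon + t)\, e_1$ for $t \in [0, 1-\epsilon]$, extended slightly on both sides; this curve meets $\partial \mathbb{B}_\epsilon^n$ at $p = \epsilon e_1$ and $\partial \mathbb{B}_1^n$ at $q = e_1$ orthogonally. Setting
\[
\Omega_\delta := \Omega \setminus \{x \in \Omega : d(x,\gamma) \leq \delta\},
\]
Proposition \ref{proposition:tube} gives $\sigma_1(\Omega_\delta) \to \sigma_1(\mathbb{B}_1^n \setminus \mathbb{B}_\epsilon^n)$ as $\delta \to 0$ immediately. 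Convergence of the boundary volume is also clear: the two spheres each lose a geodesic cap of $(n-1)$-volume of order $\delta^{n-1}$, and the lateral tube $T_\delta$ has $(n-1)$-volume of order $(1-\epsilon)\delta^{n-2}$, both going to zero since $n \geq 3$.

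Next I verify that $\Omega_\delta$ is contractible. In coordinates $x = (x_1, y) \in \mathbb{R} \times \mathbb{R}^{n-1}$, for $x \in \Omega$ with $\epsilon \leq x_1 \leq 1$ the distance from $x$ to $\gamma$ equals $|y|$, so the removed tube is essentially the solid cylinder $\{|y| \leq \delta,\ \epsilon \leq x_1 \leq 1\}$ together with two small spherical caps at the ends. Thus $\Omega_\delta$ deformation retracts onto the sphere of radius $(1+\epsilon)/2$ with a small open geodesic ball around $e_1$ removed, which is a topological $(n-1)$-disk; hence $\Omega_\delta$ is contractible.

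Finally, $\Omega_\delta$ has codimension-two corners where the lateral boundary of the tube meets the two spheres. I smooth these corners by a $C^\infty$ perturbation supported in a $\delta^2$-neighborhood of the corner set, obtaining contractible smooth domains that I still denote by $\Omega_\delta$. Continuous dependence of the first Steklov eigenvalue and of the boundary volume on $C^1$-small perturbations of the domain absorbs this smoothing into a $o(1)$ error, preserving both convergence statements.

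The substantive analytic work is already packaged in Proposition \ref{proposition:tube}, so no serious obstacle remains. The only new items are the purely topological contractibility claim and the check that the corner smoothing can be made $C^1$-small enough to preserve both the eigenvalue and boundary-volume convergences.
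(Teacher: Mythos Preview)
Your proposal is correct and follows essentially the same approach as the paper: the paper also removes a $\delta$-neighborhood of the radial segment along the positive $x_1$-axis from $\mathbb{B}_1^n\setminus\mathbb{B}_\epsilon^n$, invokes Proposition~\ref{proposition:tube}, and then remarks that the Lipschitz corners can be smoothed with arbitrarily small effect on $\sigma_1$ and $|\partial\Omega_\delta|$. You supply somewhat more detail than the paper does---an explicit deformation retraction establishing contractibility and an explicit check of the boundary-volume convergence---whereas the paper leaves these points implicit.
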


\begin{proof}
Let $\gamma$ be the line segment $\{ \phi_1=0, \, \frac{\epsilon}{2} < \rho < 1 \}$, where $\phi_1$ denotes the angle with the positive $x_1$-coordinate axis in $\mathbb{R}^n$. Given $0<\delta \ll \epsilon$, let
\[
       \Omega_\delta
       =( \mathbb{B}^n_1\setminus \mathbb{B}^n_\epsilon ) \setminus 
           \{x \in \mathbb{B}^n_1\setminus \mathbb{B}^n_\epsilon \, : \, d(x,\gamma)< \delta \}.
\]
The result follows from Proposition \ref{proposition:tube}. Note that the domain so constructed
is only Lipschitz, but the corners can be smoothed while changing the boundary volume
and the first Steklov eigenvalue by an arbitrarily small amount.
\end{proof}

\begin{customthm}{\ref{theorem:ball}}
The unit ball $\mathbb{B}^n_1$ does not maximize the first Steklov eigenvalue among contractible domains in $\mathbb{R}^n$ having the same boundary volume. The maximum of $\sigma_1(\Omega)|\partial \Omega|^{\frac{1}{n-1}}$ among rotationally symmetric connected domains $\Omega \subset \mathbb{R}^n$ is achieved by $\mathbb{B}^n_1\setminus \mathbb{B}^n_\epsilon$ for some $0 < \epsilon < 1$.
\end{customthm}
\begin{proof}
For the contractible domain $\Omega_\delta$, $0< \delta \ll \epsilon$, defined in the proof of Theorem \ref{theorem:contractible}, $\lim_{\delta \rightarrow 0} |\partial \Omega_\delta| = |\partial (\mathbb{B}^n_1 \setminus \mathbb{B}^n_\epsilon)|$. Then, by Theorem \ref{theorem:contractible} and Proposition \ref{proposition:punctured ball}, 
\[
    \lim_{\delta \rightarrow 0} \sigma_1(\Omega_\delta) |\partial \Omega_\delta|
    =\sigma_1(\mathbb{B}_1^n \setminus \mathbb{B}^n_\epsilon) |\partial(\mathbb{B}_1^n \setminus \mathbb{B}^n_\epsilon)|
    > \sigma_1(\mathbb{B}^n_1) |\partial \mathbb{B}^n_1|.
\]
Therefore, for $\delta$ sufficiently small, 
$\sigma_1(\Omega_\delta) |\partial \Omega_\delta| > \sigma_1(\mathbb{B}^n_1) |\partial \mathbb{B}^n_1|$, and the unit ball $\mathbb{B}^n_1$ does not maximize the first Steklov eigenvalue among contractible domains in $\mathbb{R}^n$ having the same boundary volume.

The second statement follows, since a rotationally symmetric connected domain in $\mathbb{R}^n$ must be congruent to either $\mathbb{B}^n_1$ or $\mathbb{B}^n_1 \setminus \mathbb{B}^n_\epsilon$ for some $0 < \epsilon < 1$, and by Proposition \ref{proposition:punctured ball} $\sigma_1(\mathbb{B}_1^n \setminus \mathbb{B}^n_\epsilon) |\partial(\mathbb{B}_1^n \setminus \mathbb{B}^n_\epsilon)| > \sigma_1(\mathbb{B}^n_1) |\partial \mathbb{B}^n_1|$. Notice also that 
as $\epsilon$ tends to $1$ the eigenvalue $\sigma_1(\mathbb{B}_1\setminus \mathbb{B}_\epsilon)$ goes to $0$ (for
example, the coordinate functions have arbitrarily small Dirichlet integral and integrate to $0$
on the boundary), so the maximum is achieved for some $\epsilon$ between $0$ and $1$.
\end{proof}
We showed in Section 2 that the number
\[
\sigma^*(n)=\sup\{\sigma_1(\Omega)|\partial\Omega|^{\frac{1}{n-1}}:\ \Omega\subset\mathbb R^n\}
\]
is finite. We could similarly consider the number
\[
\sigma_0^*(n)=\sup\{\sigma_1(\Omega)|\partial\Omega|^{\frac{1}{n-1}}:\ \Omega\subset\mathbb R^n
\ \mbox{with}\ \partial\Omega\ \mbox{connected}\}. 
\]
\begin{corollary} 
We have $\sigma_0^*(2)<\sigma^*(2)$, but $\sigma_0^*(n)=\sigma^*(n)$
for $n\geq 3$.
\end{corollary}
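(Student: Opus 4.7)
The plan is to handle the two cases separately; the inequality $\sigma_0^*(n)\le\sigma^*(n)$ is immediate from the definitions.

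For $n\ge 3$, I would invoke Theorem \ref{theorem:connected-boundary} directly. Given any smooth domain $\Omega\subset\mathbb R^n$ and any $\eta>0$, the theorem produces a subdomain $\Omega_\eta\subset\Omega$ (automatically a subdomain of $\mathbb R^n$) with connected boundary and
\[
\bigl||\partial\Omega|-|\partial\Omega_\eta|\bigr|<\eta,\qquad
|\sigma_1(\Omega)-\sigma_1(\Omega_\eta)|<\eta.
\]
Since the map $(s,V)\mapsto sV^{1/(n-1)}$ is continuous and $|\partial\Omega|>0$ is fixed, letting $\eta\to 0$ gives $\sigma_1(\Omega_\eta)|\partial\Omega_\eta|^{1/(n-1)}\to\sigma_1(\Omega)|\partial\Omega|^{1/(n-1)}$. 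Each $\Omega_\eta$ is admissible for $\sigma_0^*(n)$, so $\sigma_0^*(n)\ge\sigma_1(\Omega)|\partial\Omega|^{1/(n-1)}$; taking the supremum over $\Omega$ yields $\sigma_0^*(n)\ge\sigma^*(n)$, and hence equality.

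For $n=2$, a connected bounded planar domain with connected smooth boundary is enclosed by a single Jordan curve and is therefore simply connected. By Weinstock's theorem, recalled in the Introduction, among such domains the scale-invariant quantity $\sigma_1|\partial\Omega|$ is maximized by the disk, where it equals $2\pi$; hence $\sigma_0^*(2)=2\pi$. On the other hand, the result of \cite{FS2} summarized in Section \ref{section:bound} shows that the supremum of $\sigma_1(\Sigma)|\partial\Sigma|$ over genus-zero surfaces with $b$ boundary components is strictly increasing in $b$; concretely, the two-dimensional analogue of Proposition \ref{proposition:punctured ball} carried out in \cite[Proposition 4.2]{FS2} exhibits a planar annulus $\mathbb B_1^2\setminus\mathbb B_\epsilon^2$ with $\sigma_1|\partial\Omega|>2\pi$ for $\epsilon$ small. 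Therefore $\sigma^*(2)>2\pi=\sigma_0^*(2)$.

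The main conceptual point is the asymmetry between the two cases: the entire technical burden has been absorbed into Theorem \ref{theorem:connected-boundary} and into the cited two-dimensional results, so no significant new obstacle appears. The only thing worth verifying is that $|\partial\Omega|$ stays bounded away from zero during the continuity step for $n\ge 3$, which is automatic once $\Omega$ is fixed at the start of the argument.
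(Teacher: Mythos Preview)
Your proposal is correct and follows essentially the same approach as the paper's proof: for $n\ge 3$ you invoke Theorem~\ref{theorem:connected-boundary} to approximate the normalized eigenvalue of an arbitrary domain by that of a domain with connected boundary, and for $n=2$ you use Weinstock's theorem to pin $\sigma_0^*(2)=2\pi$ and the annulus computation from \cite[Proposition~4.2]{FS2} to exceed it. The only additions you make over the paper are the explicit remark that a planar domain with connected boundary is simply connected and the continuity argument for the normalized quantity, both of which are minor elaborations.
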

\begin{proof}
From Weinstock's theorem we have $\sigma_0^*(2)=2\pi$, but we have $\sigma^*(2)>2\pi$
(cf . \cite[Proposition 4.2]{FS2} or \cite[Example 4.2.5]{GP4}).
On the other hand for $n\geq 3$, Theorem 4.1
shows that for any smooth domain $\Omega$, and any $\epsilon>0$ there is a domain 
$\Omega_0$ with connected boundary so that 
\[
\sigma_1(\Omega)|\partial\Omega|<\sigma_1(\Omega_0)|\partial\Omega_0|+\epsilon.
\]
It follows that $\sigma^*(n)\leq\sigma_0^*(n)$, and since the opposite inequality is
clear from the definition we have $\sigma^*(n)=\sigma_0^*(n)$.
\end{proof}
     
\bibliographystyle{plain}

\begin{thebibliography}{GPPS}

\bibitem[Br]{Br} F. Brock, An isoperimetric inequality for eigenvalues of the Stekloff problem, 
                 {\em ZAAM Z. Angew. Math. Mech.} {\bf 81} (2001), 69--71. 
\bibitem[CEG]{CEG} B. Colbois, A. El Soufi, A. Girouard, Isoperimetric control of the Steklov spectrum,
               {\em J. Funct. Anal.} {\bf 261} (2011), no. 5, 1384--1399.                  
\bibitem[D]{D} B. Dittmar, Sums of reciprocal Stekloff eigenvalues, {\em Math. Nachr.} {\bf 268} (2004), 
                 44--49.                 
\bibitem[FS1]{FS1} A. Fraser, R. Schoen, The first Steklov eigenvalue, conformal geometry, and    
                 minimal surfaces, {\em Adv. Math.} {\bf 226} (2011), {\em no. 5}, 4011--4030.         
\bibitem[FS2]{FS2} A. Fraser, R. Schoen, Sharp eigenvalue bounds and minimal surfaces in the ball,
                   {\em Invent. Math.} {\bf 203} (2016), no. 3, 823--890.    
\bibitem[GP1]{GP1} A. Girouard, I. Polterovich, On the Hersch-Payne-Schiffer estimates for the 
              eigenvalues of the Steklov problem, {\em Funct. Anal. Appl.} {\bf 44} (2010), no. 2, 106--117.
\bibitem[GP2]{GP2} A. Girouard, I. Polterovich, Shape optimization for low Neumann and Steklov 
               eigenvalues, {\em Math. Methods Appl. Sci.} {\bf 33} (2010), no. 4, 501--516. 
\bibitem[GP3]{GP3} A. Girouard, I. Polterovich, Upper bounds for Steklov eigenvalues on surfaces,
              {\em Electron. Res. Announc. Math. Sci.} {\bf 19} (2012), 77--85.               
\bibitem[GP4]{GP4} A. Girouard and I. Polterovich, Spectral geometry of the Steklov problem, 
                to appear in {\em J. Spectral Theory} (2016).                     
\bibitem[H]{H} A. Hassannezhad, Conformal upper bounds for the eigenvalues of the Laplacian and 
              Steklov problem, {\em J. Funct. Anal.} {\bf 261} (2011), no. 12, 3419--3436.
\bibitem[HPS]{HPS} J. Hersch, L.~E. Payne, M.~M. Schiffer, Some inequalities for Stekloff eigenvalues,
               {\em Arch. Rational Mech. Anal.} {\bf 57} (1975), 99--114.                   
\bibitem[Ka]{Ka} M. Karpukhin, Bounds between Laplace and Steklov eigenvalues on nonnegatively 
             curved manifolds, arXiv:1512.09038.   
\bibitem[Ko]{Ko} G. Kokarev, Variational aspects of Laplace eigenvalues on Riemannian surfaces,
              {\em Adv. Math.} {\bf 258} (2014), 191--239.                                     
\bibitem[M]{M} C. B. Morrey Jr., {\em Multiple integrals in the calculus of variations}, 
             Die Grundlehren der mathematischen Wissenschaften, Band 130. 
              Springer-Verlag New York, Inc., New York 1966.                           
\bibitem[PS]{PS} I. Polterovich, D. Sher, Heat invariants of the Steklov problem, 
                   {\em J. Geom. Anal.} {\bf 25} (2015), no. 2, 924--950.       
\bibitem[W]{W} R. Weinstock, Inequalities for a classical eigenvalue problem,
                  {\em J. Rational Mech. Anal.} {\bf 3} (1954), 745--753.   
                                    
\end{thebibliography}

\end{document}